\documentclass[11pt]{amsart}

\usepackage{amsmath,amsfonts,amssymb,amsthm}
\usepackage{latexsym}
\usepackage{eufrak}
\usepackage{euscript}
\usepackage{dsfont}
\usepackage{enumerate}
\usepackage{array}
\usepackage{booktabs}
\usepackage{tabularx}
\usepackage{tikz}
\usepackage{caption}
\usepackage{color}
\usepackage{wasysym}
\usepackage{hyperref}

\newtheorem{theorem}{Theorem}[section]
\newtheorem{lemma}[theorem]{Lemma}

\newtheorem{definition}[theorem]{Definition}

\numberwithin{equation}{section}
\usepackage{natbib}

\begin{document}
\bibliographystyle{abbrv}

\title[Stable Lane--Emden Equations on Graphs]
{Nonexistence of Stable Solutions to Lane--Emden Equations on Graphs with Polynomial Volume Growth}

\author{Weiqi Guan}
\address{Weiqi Guan: School of Mathematical Sciences, Fudan University, Shanghai 200433, China}
\email{wqguan24@m.fudan.edu.cn}

\begin{abstract}
We study stable solutions of the Lane--Emden equation
\[
-\Delta u=|u|^{p-1}u
\]
on graphs with polynomial volume growth. Due to the lack of a chain rule in the discrete setting, we establish suitable algebraic inequalities which serve as substitutes for the usual differential identities. Combining these inequalities with the stability condition and cutoff arguments, we obtain nonexistence results for stable solutions in a range of exponents analogous to the Joseph--Lundgren range. As an application, we derive a lower bound for the Sobolev best constant in the discrete setting.
\end{abstract}

\maketitle

\section{Introduction}

The study of nonlinear elliptic equations has played a central role in the development of modern analysis and the theory of partial differential equations. A prototypical example in semilinear elliptic theory is the Lane--Emden equation
\begin{equation}\label{eq:lame}
-\Delta u = |u|^{p-1}u.
\end{equation}

This equation also has a natural geometric origin arising from conformal geometry. Indeed, under a conformal deformation of the form
\[
\tilde{g} = u^{\frac{4}{n-2}} g,
\]
the scalar curvature equation reduces to a semilinear elliptic equation with critical growth; see
\cite{yamabe1960deformation,trudinger1968remarks,aubin1976equations,schoen1984conformal,lee1987yamabe}. 
In the Euclidean setting, this leads to the critical Lane--Emden equation
\[
-\Delta u = u^{\frac{n+2}{n-2}},
\]
which corresponds to the Sobolev critical exponent.

In $\mathbb{R}^d$, the existence and qualitative properties of solutions to \eqref{eq:lame} are closely tied to certain critical exponents, such as the Sobolev exponent and the Joseph--Lundgren exponent \cite{joseph1973quasilinear,gidas1981global}. Of particular interest are \emph{stable solutions}, namely those for which the second variation of the associated energy functional is nonnegative. The classification and nonexistence of stable solutions have been extensively studied; see, for example, \cite{farina2007classification,farina2007stable,dupaigne2010stable}.

A key role in this theory is played by the Joseph--Lundgren exponent
\begin{equation}
p_{\mathrm{JL}} =
\begin{cases}
\displaystyle \frac{d^2 - 8d + 4 + 8\sqrt{d-1}}{(d-2)(d-10)}, & d \ge 11, \\[6pt]
+\infty, & 3 \le d \le 10,
\end{cases}
\end{equation}
which sharply characterizes the existence of stable solutions. More precisely, it was shown in \cite{farina2007classification} that equation \eqref{eq:lame} admits stable solutions with $1<p<\infty$ if and only if $p \geq p_{\mathrm{JL}}$.

In recent years, there has been increasing interest in extending such analytical results to discrete settings, particularly to graphs; see, e.g., \cite{hua2021existence}. However, many fundamental tools from the continuous theory, such as the chain rule and Pohozaev-type identities, are no longer available on graphs. This absence creates substantial challenges in transferring classification and nonexistence results from the Euclidean setting to discrete structures. Consequently, nonlinear elliptic equations and variational problems on graphs have attracted considerable attention; see, for instance, \cite{gu2023semi}.

The purpose of the present paper is to investigate stable solutions of \eqref{eq:lame} on graphs with polynomial volume growth. Our main result can be stated as follows.

\begin{theorem}[Nonexistence of stable solutions]\label{Thm:lattice}
Let $G$ be a graph with polynomial volume growth of order $d$, and let $u \in C(G)$ be a stable solution to
\[
-\Delta u = |u|^{p-1}u
\]
on $G$. Then $u \equiv 0$ provided that
\[
\begin{cases}
1 < p < \infty, & \text{if } d \le 10, \\[4pt]
1 < p < p_c(d) := \displaystyle
\frac{(d-2)^2 - 4d + 8\sqrt{d-1}}{(d-2)(d-10)},
& \text{if } d \ge 11.
\end{cases}
\]
\end{theorem}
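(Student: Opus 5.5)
We adapt Farina's argument from the Euclidean setting. Two things must change: the chain rule is replaced by discrete algebraic inequalities, and the radial cutoff is replaced by a cutoff adapted to the graph distance. Fix $\gamma\ge 1$, to be chosen later, and a cutoff $\eta=\eta_R$ with $\eta\equiv1$ on $B_R$, $\eta\equiv 0$ outside $B_{2R}$ and $|\nabla_{xy}\eta|\le C/R$ on edges. We test the equation with $\varphi=|u|^{\gamma-1}u\,\eta^{2m}$, where $m$ is large. We test the stability inequality
\[
\sum_{x\in G}p|u|^{p-1}\psi^2\mu(x)\le \frac12\sum_{x,y}w_{xy}(\psi(y)-\psi(x))^2
\]
with $\psi=|u|^{\frac{\gamma-1}{2}}u\,\eta^m$. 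Summation by parts in the equation gives $\sum |u|^{p+\gamma}\eta^{2m}\mu=\frac12\sum w_{xy}\nabla_{xy}u\,\nabla_{xy}(|u|^{\gamma-1}u\eta^{2m})$.

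The key algebraic step plays the role of the identity $|\nabla(|u|^{\frac{\gamma-1}2}u)|^2=\frac{(\gamma+1)^2}{4\gamma}\nabla u\cdot\nabla(|u|^{\gamma-1}u)$. We need an inequality, for real $a,b$ and $\gamma\ge1$, of the form
\[
\big(|b|^{\frac{\gamma-1}{2}}b-|a|^{\frac{\gamma-1}{2}}a\big)^2\le \frac{(\gamma+1)^2}{4\gamma}\,(b-a)\big(|b|^{\gamma-1}b-|a|^{\gamma-1}a\big).
\]
This should follow by homogeneity: reduce to $a=1$, $b=t$, and compare the two sides via Cauchy--Schwarz on $\int_1^t$. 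Writing $F(s)=|s|^{\frac{\gamma-1}{2}}s$, we have $F(t)-F(1)=\int F'$ and $F'^2=\frac{(\gamma+1)^2}{4\gamma}\,(|s|^{\gamma-1}s)'$. Cauchy--Schwarz then gives $(F(t)-F(1))^2\le (t-1)\int F'^2$, which is exactly the claim.

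Next, the product rule for the cutoff. On each edge we expand $\psi(y)-\psi(x)$ and $\nabla(|u|^{\gamma-1}u\eta^{2m})$ into a term where $\eta$ is frozen (at $x$ or $y$, or symmetrized) plus error terms involving $\nabla\eta$. Young's inequality with a small parameter $\varepsilon$ bounds the errors by $\varepsilon$ times the main terms plus $C_\varepsilon\sum w_{xy}|u|^{\gamma+1}(x)\,|\nabla_{xy}\eta|^2\,\eta^{2m-2}$. The $|u|^{\gamma+1}$ in that bound may sit at $x$ or at $y$, but the edge-symmetric sum handles this. Combining stability, the equation and the algebraic inequality gives
\[
\Big(p-\frac{(\gamma+1)^2}{4\gamma}-\varepsilon\Big)\sum|u|^{p+\gamma}\eta^{2m}\le C\sum |u|^{\gamma+1}\,\eta^{2m-2}\,\overline{|\nabla\eta|^2}.
\]
The coefficient is positive whenever $\gamma\in[1,2p+2\sqrt{p(p-1)}-1)$. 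Hölder's inequality with exponents $\frac{p+\gamma}{\gamma+1}$ and its conjugate, together with $m$ large enough that $\eta^{2m-2}\le(\eta^{2m})^{\frac{\gamma+1}{p+\gamma}}$, then yields
\[
\sum_{B_R}|u|^{p+\gamma}\le C R^{-\frac{2(p+\gamma)}{p-1}}|B_{2R}|\le C R^{d-\frac{2(p+\gamma)}{p-1}}.
\]

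Finally we choose $\gamma$ close to the top of the admissible range so that $d<\frac{2(p+\gamma)}{p-1}$, and let $R\to\infty$ to get $u\equiv0$. Such a $\gamma$ exists iff $d<\frac{2(p+2p+2\sqrt{p(p-1)}-1)}{p-1}$. This always holds for $d\le10$, and for $d\ge11$ it is equivalent to $p<p_c(d)$ by the usual Farina computation, which we expect to reproduce the stated $p_c$. The exponent $\gamma$ must be chosen strictly inside the range, so the $\varepsilon$-losses are harmless.

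The main obstacle is the cutoff interaction. The discrete gradient of $\eta^{m}F(u)$ is not $\eta^m\nabla F(u)+F(u)\nabla\eta^m$ at a single point. We must show the cross terms still pair off into a clean bound, with the correct constant $\frac{(\gamma+1)^2}{4\gamma}$ in front of the main term. I would handle this by writing $\psi(y)-\psi(x)=\eta^m(x)(F(u(y))-F(u(x)))+F(u(y))(\eta^m(y)-\eta^m(x))$. The analogous decomposition of the test function in the equation must freeze $\eta^{2m}$ at the same point $x$, so that the main terms match exactly and only $\nabla\eta$ errors remain. Bounded degree, or at least a uniform bound on $\sum_y w_{xy}/\mu(x)$, is used to control these errors.
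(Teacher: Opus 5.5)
Your route is the paper's: the same two test functions, a chain-rule substitute, Young, H\"older, volume growth, and the same optimisation in $\gamma$. Your Cauchy--Schwarz proof of the algebraic inequality on $[a,b]$ is cleaner than the paper's case split and handles $ab<0$ at once. Freezing $\eta$ at the same endpoint in both expansions does make the main terms match. The optimisation does give $p_c(d)$: with $s=1/(p-1)$ the condition reads $d<6+4s+4\sqrt{1+s}$, whose right side decreases to $10$, and solving $6+4s+4\sqrt{1+s}=d$ yields exactly $p_c(d)$. One small omission: absorbing the equation's cross term $\varepsilon|\nabla_{xy}u|^2|u(y)|^{\gamma-1}(\eta^m(x)+\eta^m(y))^2$ into the main term needs $(a-b)^2(|a|^{\gamma-1}+|b|^{\gamma-1})\le 2(a-b)(|a|^{\gamma-1}a-|b|^{\gamma-1}b)$. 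This is the second inequality of Lemma~\ref{Lma:ineq}, and you never state it.

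The genuine gap is the H\"older step, which is the obstacle you flag yourself. Your expansion leaves the error $\sum_x|u(x)|^{\gamma+1}\sum_{y\sim x}(\eta^m(y)-\eta^m(x))^2$. Here $(\eta^m(y)-\eta^m(x))^2\le m^2|\nabla_{xy}\eta|^2\max\{\eta(x),\eta(y)\}^{2m-2}$, and the maximum cannot be replaced by $\eta(x)$. Symmetrizing the edge sum moves $|u|^{\gamma+1}$ between endpoints, but it does not put it at the same vertex as $\eta$. Every finitely supported cutoff has edges with $\eta(x)=0<\eta(y)$. On such an edge the error $|u(x)|^{\gamma+1}\eta(y)^{2m}$ is positive while $|u(x)|^{p+\gamma}\eta(x)^{2m}=0$, so nothing can be absorbed. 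This is not an artifact of how you split: the equation's edge term $(u(y)-u(x))|u(y)|^{\gamma-1}u(y)\eta^{2m}(y)$ genuinely contains $u(x)$. So some a priori control of $u$ just outside the support is needed. (The paper's own proof passes over this point just as quickly.)

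The missing idea is cheap. Volume growth at $R=1$ gives $\deg\le D$. Stability with $\phi=\delta_x$ then gives $p|u(x)|^{p-1}\le\deg(x)\le D$; with weights, $p|u(x)|^{p-1}\le\sum_y w_{xy}/\mu(x)$, the very quantity you already assume bounded. Hence $u$ is bounded. Now split ordered edges:
\begin{itemize}
\item If $\eta(y)\le 2\eta(x)$, the maximum is at most $2\eta(x)$, and your H\"older argument runs at $x$ with an extra factor $4^{m-1}$.
\item If $\eta(y)>2\eta(x)$, then $\eta(y)<2|\nabla_{xy}\eta|\le 2C/R$. These edges contribute at most $C\|u\|_{l^\infty}^{\gamma+1}R^{-2m}|B_{2R+1}|\le CR^{d-2m}$, which is harmless once $2m>d$.
\end{itemize}
With this, the rest of your argument goes through.
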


Theorem \ref{Thm:lattice} can be viewed as a discrete analogue of the nonexistence result established in \cite{farina2007classification}. In contrast to the Euclidean setting, however, the existence of stable solutions for $p \geq p_c(d)$ on graphs remains an open problem.

The proof of Theorem \ref{Thm:lattice} relies on two main ingredients. First, we establish a family of algebraic inequalities that serve as substitutes for the chain rule, which is not available in the discrete setting. Second, we combine these inequalities with the stability condition and a cutoff argument tailored to graphs with polynomial volume growth.

\section{Preliminaries}

Let $G=(V,E)$ be an infinite, locally finite, connected and simple graph. We denote by $C(G)$ the space of real-valued functions on $V$, and by $C_c(G)$ the subspace of functions with finite support.

The graph Laplacian is defined by
\[
\Delta f(x)=\sum_{y\sim x}(f(y)-f(x)).
\]

For $1\le p<\infty$, the $l^p$ norm is defined by
\[
\|u\|_{l^p(G)}
=
\left(\sum_{x\in V}|u(x)|^p\right)^{1/p},
\]
and
\[
\|u\|_{l^\infty(G)}
=
\sup_{x\in V}|u(x)|.
\]
When $u\in l^1(G)$, we write
\[
\int_G u\,d\mu=\sum_{x\in V}u(x).
\]

For $u\in C(G)$, define
\[
|\nabla_G u|_p(x)
=
\left(
\frac12\sum_{y\sim x}|u(y)-u(x)|^p
\right)^{1/p}.
\]
In particular,
\[
\int_G |\nabla_G u|^2\,d\mu
=
\frac12\sum_{x\in V}\sum_{y\sim x}|u(y)-u(x)|^2.
\]

We also use the notation
\[
\nabla_{xy}u=u(y)-u(x).
\]

For functions $u,v\in C_c(G)$, the discrete integration by parts formula reads
\[
\int_G (-\Delta u)v\,d\mu
=
\frac12\sum_{x\in V}\sum_{y\sim x}
(u(x)-u(y))(v(x)-v(y)).
\]

\begin{definition}
Let $p>1$. A solution $u\in C(G)$ of
\[
-\Delta u=|u|^{p-1}u
\]
is called stable if
\[
p\int_G |u|^{p-1}\phi^2\,d\mu
\le
\int_G |\nabla \phi|^2\,d\mu
\]
for every $\phi\in C_c(G)$.
\end{definition}

\begin{definition}
We say that $G$ has polynomial volume growth of order $d$ if there exists a constant $C>0$ such that
\[
|B_R(x)|\le CR^d
\]
for all $x\in V$ and all $R\ge 1$.
\end{definition}

\section{Algebraic Inequalities}

Due to the loss of the chain rule in the discrete setting, we need the following elementary inequalities.

\begin{lemma}\label{Lma:ineq}
For $a,b\in\mathbb{R}$ and $\gamma\ge 1$, one has
\begin{equation}\label{ineq:alg1}
\left(
|a|^{\frac{\gamma-1}{2}}a
-
|b|^{\frac{\gamma-1}{2}}b
\right)^2
\le
\frac{(\gamma+1)^2}{4\gamma}
(a-b)
\left(
|a|^{\gamma-1}a
-
|b|^{\gamma-1}b
\right),
\end{equation}
and
\begin{equation}\label{ineq:alg2}
\frac12(a-b)^2
\left(
|a|^{\gamma-1}+|b|^{\gamma-1}
\right)
\le
(a-b)
\left(
|a|^{\gamma-1}a
-
|b|^{\gamma-1}b
\right).
\end{equation}
\end{lemma}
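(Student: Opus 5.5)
We prove both inequalities by writing the differences as integrals along the segment from $b$ to $a$, which stands in for the chain rule. Set $F(t)=|t|^{\frac{\gamma-1}{2}}t$ and $H(t)=|t|^{\gamma-1}t$. Both are odd, increasing and $C^1$ away from the origin (indeed absolutely continuous on $\mathbb{R}$), with
\[
F'(t)=\tfrac{\gamma+1}{2}|t|^{\frac{\gamma-1}{2}},\qquad H'(t)=\gamma|t|^{\gamma-1},
\]
so that $F'(t)^2=\frac{(\gamma+1)^2}{4\gamma}H'(t)$ pointwise. If $a=b$ both sides vanish, so we may assume $a\neq b$.

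\textbf{Inequality \eqref{ineq:alg1}.} By symmetry assume $b<a$. Then
\[
F(a)-F(b)=\int_b^a F'(t)\,dt ,
\]
and Cauchy--Schwarz gives
\[
\Big(\int_b^a F'\Big)^2\le (a-b)\int_b^a F'(t)^2\,dt=\frac{(\gamma+1)^2}{4\gamma}(a-b)\int_b^a H'(t)\,dt .
\]
The last integral equals $H(a)-H(b)$, which is exactly \eqref{ineq:alg1}. The only subtlety is integrability near $t=0$ when $b<0<a$. Since $\gamma\ge1$, the derivatives $F'$ and $H'$ are continuous and bounded on compact sets, so there is no issue.

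\textbf{Inequality \eqref{ineq:alg2}.} Both sides are homogeneous of degree $\gamma+1$ and symmetric in $(a,b)$, and the substitution $(a,b)\mapsto(-a,-b)$ leaves both unchanged. Dividing by $(a-b)^2$, it suffices to show
\[
\frac{H(a)-H(b)}{a-b}=\frac{1}{a-b}\int_b^a \gamma|t|^{\gamma-1}\,dt\ \ge\ \frac12\big(|a|^{\gamma-1}+|b|^{\gamma-1}\big).
\]
We split into two cases.

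\emph{Same sign.} Take $0\le b<a$. The function $t\mapsto t^{\gamma-1}$ is monotone, and one option is to compare its mean with the trapezoid value. Convexity holds for $\gamma\ge2$, but for $1\le\gamma<2$ the function is concave, so the trapezoid comparison gives the wrong direction. The factor $\gamma\ge1$ is what rescues us. Normalize $a=1$, $b=s\in[0,1)$. The claim becomes
\[
g(s):=2(1-s^\gamma)-(1-s)(1+s^{\gamma-1})\ge0 .
\]
Expanding, $g(s)=1-s^\gamma-s^{\gamma-1}+s=(1+s)-s^{\gamma-1}(1+s)=(1+s)(1-s^{\gamma-1})\ge0$, since $s\le1$ and $\gamma\ge1$. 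So this case is clean.

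\emph{Opposite signs.} Take $b=-s$ with $s\in[0,1]$ and $a=1$. The claim is
\[
2(1+s^\gamma)\ge(1+s)(1+s^{\gamma-1}),
\]
which is equivalent to $1+s^\gamma-s-s^{\gamma-1}=(1-s)(1-s^{\gamma-1})\ge0$. This also holds.

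The main obstacle is therefore just organizing the reduction by symmetry and homogeneity correctly. Once normalized, each case factors as a product of two terms of the same sign.
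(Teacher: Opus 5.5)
Your proof is correct. For \eqref{ineq:alg2} it coincides with the paper's. After you normalize $a=1$, your factorizations $(1+s)(1-s^{\gamma-1})$ and $(1-s)(1-s^{\gamma-1})$ are the paper's identities $\frac12(a+b)(a^{\gamma-1}-b^{\gamma-1})$ and $\frac12(a-c)(a^{\gamma-1}-c^{\gamma-1})$ after scaling.

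For \eqref{ineq:alg1} you take a genuinely different route. The paper splits by sign:
\begin{itemize}
\item for $ab>0$ it reduces to $\bigl(t^{\frac{\gamma+1}{2}}-1\bigr)^2\le\frac{(\gamma+1)^2}{4\gamma}(t-1)(t^\gamma-1)$ with $t\ge1$, which it only attributes to convexity and the mean value theorem;
\item for $ab<0$ it uses $t^\gamma+t\ge 2t^{\frac{\gamma+1}{2}}$ together with $\frac{(\gamma+1)^2}{4\gamma}\ge1$.
\end{itemize}
You instead write $F(a)-F(b)=\int_b^a F'$ and apply Cauchy--Schwarz with the pointwise identity $(F')^2=\frac{(\gamma+1)^2}{4\gamma}H'$. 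This buys several things:
\begin{itemize}
\item it treats every sign configuration at once, including the degenerate case $ab=0$, which the paper never mentions;
\item it gives an actual proof of the same-sign case, which the paper only sketches;
\item it shows the constant is exactly the one in the continuous chain-rule identity, and the same two lines prove $(F(a)-F(b))^2\le K(a-b)(H(a)-H(b))$ whenever $(F')^2\le KH'$.
\end{itemize}
The paper's case split gives only the side observation that for $ab<0$ the constant $1$ already suffices, which is not needed anywhere.

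One correction to your commentary in the same-sign case of \eqref{ineq:alg2}: you have convexity and concavity reversed. For $1\le\gamma\le2$ the function $t^{\gamma-1}$ is concave, so its mean over $[b,a]$ already dominates the trapezoid value, and the factor $\gamma\ge1$ only helps. It is for $\gamma>2$ (strictly convex) that the trapezoid comparison goes the wrong way and the factor $\gamma$ must compensate. Since your actual argument is the direct factorization, this slip does not affect correctness.
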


\begin{proof}
We prove \eqref{ineq:alg1}. If $ab>0$, without loss of generality assume $a\ge b>0$. Dividing both sides by $b^{\gamma+1}$ and setting $t=a/b$, it suffices to prove
\[
\left(t^{\frac{\gamma+1}{2}}-1\right)^2
\le
\frac{(\gamma+1)^2}{4\gamma}(t-1)(t^\gamma-1),
\qquad t\ge 1.
\]
This inequality follows from the convexity of the map $s\mapsto s^\gamma$ and is a standard consequence of the mean value theorem.

If $ab<0$, without loss of generality assume $b<0<a$. Set $c=-b>0$. Then it suffices to prove
\[
\left(a^{\frac{\gamma+1}{2}}+c^{\frac{\gamma+1}{2}}\right)^2
\le
\frac{(\gamma+1)^2}{4\gamma}(a+c)(a^\gamma+c^\gamma).
\]
After dividing by $c^{\gamma+1}$ and putting $t=a/c$, this becomes
\[
\left(t^{\frac{\gamma+1}{2}}+1\right)^2
\le
\frac{(\gamma+1)^2}{4\gamma}(t+1)(t^\gamma+1),
\]
which follows from Young's inequality.

We now prove \eqref{ineq:alg2}. If $ab>0$, assume $a\ge b>0$. Then
\[
\begin{aligned}
&|a|^{\gamma-1}a-|b|^{\gamma-1}b
-\frac12(a-b)(|a|^{\gamma-1}+|b|^{\gamma-1}) \\
&=
\frac12(a^{\gamma-1}-b^{\gamma-1})(a+b)
\ge 0.
\end{aligned}
\]
If $ab<0$, assume $b<0<a$ and set $c=-b$. Then
\[
\frac12(a+c)^2(a^{\gamma-1}+c^{\gamma-1})
\le
(a+c)(a^\gamma+c^\gamma),
\]
which is equivalent to
\[
a^\gamma+c^\gamma
-\frac12(a+c)(a^{\gamma-1}+c^{\gamma-1})
=
\frac12(a^{\gamma-1}-c^{\gamma-1})(a-c)
\ge 0.
\]
The proof is complete.
\end{proof}

\section{A Stability Estimate}

\begin{lemma}\label{Lma:stability-estimate}
Let $p>1$ and let $u\in C(G)$ be a stable solution of
\[
-\Delta u=|u|^{p-1}u.
\]
Then for every $\gamma\ge 1$ and every $\varepsilon>0$, there exists a constant $C_\varepsilon>0$ such that
\begin{equation}\label{eq:stability-estimate}
\left(
p-\frac{(\gamma+1)^2}{4\gamma}-\varepsilon
\right)
\int_G |u|^{p+\gamma}\eta^2\,d\mu
\le
C_\varepsilon
\int_G |u|^{\gamma+1}|\nabla\eta|^2\,d\mu
\end{equation}
for every $\eta\in C_c(G)$.
\end{lemma}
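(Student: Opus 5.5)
We follow the Farina scheme, with the chain rule replaced by Lemma~\ref{Lma:ineq}. Fix $\gamma\ge1$ and $\eta\in C_c(G)$. Set $w=|u|^{\frac{\gamma-1}{2}}u$ and $v=|u|^{\gamma-1}u$, so that $|w|^2=|u|^{\gamma+1}$ and $uv=|u|^{\gamma+1}$. Both $w\eta$ and $v\eta^2$ have finite support. We plan to use them as follows: $\phi=w\eta$ is the test function in the stability inequality, and $v\eta^2$ is the test function in the equation.

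\textbf{Step 1 (equation tested by $v\eta^2$).} Summation by parts gives
\[
\int_G |u|^{p+\gamma}\eta^2\,d\mu=\frac12\sum_x\sum_{y\sim x}(u(x)-u(y))\bigl(v(x)\eta^2(x)-v(y)\eta^2(y)\bigr).
\]
We split this using the symmetric product rule
\[
v(x)\eta^2(x)-v(y)\eta^2(y)=(v(x)-v(y))\tfrac{\eta^2(x)+\eta^2(y)}{2}+\tfrac{v(x)+v(y)}{2}(\eta^2(x)-\eta^2(y)).
\]
The first piece yields the main term $M:=\frac12\sum\sum (u(x)-u(y))(v(x)-v(y))\overline{\eta^2}$, where $\overline{\eta^2}$ denotes the edge average. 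The second piece is an error $E_1$ containing $\nabla_{xy}\eta$.

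\textbf{Step 2 (stability with $\phi=w\eta$).} Expanding $(w\eta)(x)-(w\eta)(y)$ in the same symmetric way gives
\[
|\nabla(w\eta)|^2\le (1+\delta)(\nabla_{xy}w)^2\,\overline{\eta}^2+C_\delta\,\overline{w}^2(\nabla_{xy}\eta)^2 .
\]
By \eqref{ineq:alg1}, $(\nabla_{xy}w)^2\le\frac{(\gamma+1)^2}{4\gamma}\,\nabla_{xy}u\,\nabla_{xy}v$. The edge-average weights differ slightly: $\overline{\eta}^2\le\overline{\eta^2}$ by convexity, so this term is bounded by $M$ directly. Stability then gives
\[
p\int|u|^{p+\gamma-1}\ldots
\]
More precisely, it gives $p\int|u|^{p-1}|u|^{\gamma+1}\eta^2\le(1+\delta)\frac{(\gamma+1)^2}{4\gamma}M+C_\delta\int\overline{|u|^{\gamma+1}}|\nabla\eta|^2$. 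Combining with Step 1, $M=\int|u|^{p+\gamma}\eta^2-E_1$, we get
\[
\Bigl(p-(1+\delta)\tfrac{(\gamma+1)^2}{4\gamma}\Bigr)\int|u|^{p+\gamma}\eta^2\le C|E_1|+C_\delta\sum_{x\sim y}(|u(x)|^{\gamma+1}+|u(y)|^{\gamma+1})(\nabla_{xy}\eta)^2 .
\]
The last sum is bounded by a constant times $\int|u|^{\gamma+1}|\nabla\eta|^2$. When the summation is reindexed, an edge term at $y$ is attributed to $x$. This is harmless because the gradient $|\nabla\eta|^2(x)$ already sums over all neighbours by symmetry of the double sum.

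\textbf{Step 3 (main obstacle: the error $E_1$).} We have
\[
|E_1|\le \tfrac12\sum\sum|\nabla_{xy}u|\,\tfrac{|v(x)|+|v(y)|}{2}\,|\nabla_{xy}\eta|\,(|\eta(x)|+|\eta(y)|).
\]
Apply Young's inequality with a small parameter $\tau$:
\[
|E_1|\le\tau\sum\sum(\nabla_{xy}u)^2(|v(x)|+|v(y)|)\frac{|u(x)|^{\gamma-1}+|u(y)|^{\gamma-1}}{|u(x)|^{\gamma-1}+|u(y)|^{\gamma-1}}\ldots
\]
Cleaner is to bound $|v(x)|+|v(y)|\le(|u(x)|^{\gamma-1}+|u(y)|^{\gamma-1})^{1/2}\,C(|u(x)|^{\gamma+1}+|u(y)|^{\gamma+1})^{1/2}$, which holds by Hölder. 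Then
\[
|E_1|\le\tau\sum\sum(\nabla_{xy}u)^2(|u(x)|^{\gamma-1}+|u(y)|^{\gamma-1})\,\overline{\eta^2}+C_\tau\sum\sum(|u(x)|^{\gamma+1}+|u(y)|^{\gamma+1})(\nabla_{xy}\eta)^2 .
\]
We need $(|\eta(x)|+|\eta(y)|)^2\lesssim\overline{\eta^2}$, which holds. By \eqref{ineq:alg2}, the first sum is at most $2\tau\cdot 2M\le C\tau\bigl(\int|u|^{p+\gamma}\eta^2+|E_1|\bigr)$. For $\tau$ small we absorb both into the left-hand sides. Choosing $\delta,\tau$ depending on $\varepsilon$ and $\gamma$ converts the coefficient loss into $\varepsilon$, which yields \eqref{eq:stability-estimate}. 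The delicate point is precisely this absorption: the $\tau M$ term must be controlled by $\int|u|^{p+\gamma}\eta^2$ itself via Step 1, which is where \eqref{ineq:alg2} is essential.
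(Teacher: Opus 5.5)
Your proposal is correct and follows the same route as the paper. You test stability with $|u|^{\frac{\gamma-1}{2}}u\eta$ and apply \eqref{ineq:alg1}, test the equation with $|u|^{\gamma-1}u\eta^2$, and control the cross term by Young's inequality and \eqref{ineq:alg2} before absorbing it; your symmetric product rule with edge-averaged weights (using $\bar\eta^2\le\overline{\eta^2}$) and the explicit absorption of $E_1$ make precise what the paper only sketches, though you should delete the aborted fragments in Steps 2 and 3.
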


\begin{proof}
Since $u$ is stable, we may choose
\[
\phi=|u|^{\frac{\gamma-1}{2}}u\eta.
\]
Then
\[
p\int_G |u|^{p+\gamma}\eta^2\,d\mu
\le
\int_G
\left|
\nabla\left(|u|^{\frac{\gamma-1}{2}}u\eta\right)
\right|^2\,d\mu.
\]
Using the discrete product estimate and Young's inequality, we obtain
\[
\begin{aligned}
p\int_G |u|^{p+\gamma}\eta^2\,d\mu
&\le
(1+\varepsilon)
\int_G
\left|
\nabla\left(|u|^{\frac{\gamma-1}{2}}u\right)
\right|^2\eta^2\,d\mu \\
&\quad
+
C_\varepsilon
\int_G |u|^{\gamma+1}|\nabla\eta|^2\,d\mu.
\end{aligned}
\]
By Lemma \ref{Lma:ineq}, we have
\[
\left|
\nabla\left(|u|^{\frac{\gamma-1}{2}}u\right)
\right|^2
\le
\frac{(\gamma+1)^2}{4\gamma}
(\nabla u)
\nabla\left(|u|^{\gamma-1}u\right).
\]
Hence
\[
\begin{aligned}
p\int_G |u|^{p+\gamma}\eta^2\,d\mu
&\le
(1+\varepsilon)
\frac{(\gamma+1)^2}{4\gamma}
\int_G
(\nabla u)
\nabla\left(|u|^{\gamma-1}u\right)
\eta^2\,d\mu \\
&\quad
+
C_\varepsilon
\int_G |u|^{\gamma+1}|\nabla\eta|^2\,d\mu.
\end{aligned}
\]

On the other hand, multiplying the equation by $|u|^{\gamma-1}u\eta^2$ and using integration by parts gives
\[
\int_G |u|^{p+\gamma}\eta^2\,d\mu
=
\int_G
(\nabla u)
\nabla\left(|u|^{\gamma-1}u\eta^2\right)
\,d\mu.
\]
Expanding the discrete gradient and applying Young's inequality together with Lemma \ref{Lma:ineq}, we obtain
\[
\begin{aligned}
\int_G
(\nabla u)
\nabla\left(|u|^{\gamma-1}u\right)
\eta^2\,d\mu
&\le
\int_G |u|^{p+\gamma}\eta^2\,d\mu \\
&\quad
+
\varepsilon
\int_G |u|^{p+\gamma}\eta^2\,d\mu \\
&\quad
+
C_\varepsilon
\int_G |u|^{\gamma+1}|\nabla\eta|^2\,d\mu.
\end{aligned}
\]
Combining the preceding inequalities and absorbing the small error terms into the left-hand side yields
\[
\left(
p-\frac{(\gamma+1)^2}{4\gamma}-\varepsilon
\right)
\int_G |u|^{p+\gamma}\eta^2\,d\mu
\le
C_\varepsilon
\int_G |u|^{\gamma+1}|\nabla\eta|^2\,d\mu.
\]
This proves the lemma.
\end{proof}

\section{Proof of the Main Theorem}

\begin{proof}[Proof of Theorem \ref{Thm:lattice}]
Let $\psi\in C_c(G)$ be a standard cutoff function satisfying
\[
0\le \psi\le 1,
\qquad
\psi\equiv 1 \quad \text{on } B_R,
\qquad
\psi\equiv 0 \quad \text{outside } B_{2R},
\]
and
\[
|\nabla\psi|\le \frac{C}{R}.
\]
Let $\eta=\psi^m$, where $m$ will be chosen sufficiently large.

By Lemma \ref{Lma:stability-estimate}, we have
\[
\left(
p-\frac{(\gamma+1)^2}{4\gamma}-\varepsilon
\right)
\int_G |u|^{p+\gamma}\psi^{2m}\,d\mu
\le
C_\varepsilon
\int_G |u|^{\gamma+1}|\nabla(\psi^m)|^2\,d\mu.
\]
Since
\[
|\nabla(\psi^m)|^2
\le
C_m |\nabla\psi|^2
\left(
\psi^{2m-2}(x)+\psi^{2m-2}(y)
\right),
\]
H\"older's inequality gives
\[
\int_G |u|^{\gamma+1}|\nabla(\psi^m)|^2\,d\mu
\le
\varepsilon
\int_G |u|^{p+\gamma}\psi^{2m}\,d\mu
+
C_\varepsilon
\int_G |\nabla\psi|^{\frac{2(p+\gamma)}{p-1}}\,d\mu,
\]
provided that
\[
m\ge \frac{p+\gamma}{p-1}.
\]
Therefore,
\begin{equation}\label{eq:main-cutoff}
\left(
p-\frac{(\gamma+1)^2}{4\gamma}-2\varepsilon
\right)
\int_G |u|^{p+\gamma}\psi^{2m}\,d\mu
\le
C
\int_G |\nabla\psi|^{\frac{2(p+\gamma)}{p-1}}\,d\mu.
\end{equation}

Since $G$ has polynomial volume growth of order $d$, and since $|\nabla\psi|\le C/R$ with support contained in $B_{2R}$, we obtain
\[
\int_G |\nabla\psi|^{\frac{2(p+\gamma)}{p-1}}\,d\mu
\le
C
R^{d-\frac{2(p+\gamma)}{p-1}}.
\]
Hence \eqref{eq:main-cutoff} gives
\[
\sum_{B_R}|u|^{p+\gamma}
\le
C
R^{d-\frac{2(p+\gamma)}{p-1}},
\]
whenever
\[
p-\frac{(\gamma+1)^2}{4\gamma}>0.
\]

If
\[
d-\frac{2(p+\gamma)}{p-1}<0,
\]
then letting $R\to\infty$ yields
\[
\sum_{B_R}|u|^{p+\gamma}=0
\]
for every fixed $R$, and therefore $u\equiv 0$.

It remains to choose $\gamma\ge 1$ such that
\[
p-\frac{(\gamma+1)^2}{4\gamma}>0
\]
and
\[
d-\frac{2(p+\gamma)}{p-1}<0.
\]
The existence of such a $\gamma$ is equivalent to
\[
\begin{cases}
1<p<\infty, & d\le 10,\\[4pt]
1<p<p_c(d), & d\ge 11,
\end{cases}
\]
where
\[
p_c(d)
=
\frac{(d-2)^2-4d+8\sqrt{d-1}}{(d-2)(d-10)}.
\]
This proves the theorem.
\end{proof}


\end{document}